\newtheorem{thm}{Theorem}[section]
\newtheorem{lem}[thm]{Lemma}
\newtheorem{prop}[thm]{Proposition}
\newtheorem{cor}[thm]{Corollary}
\theoremstyle{definition}
\newtheorem{exa}[thm]{Example}
\theoremstyle{remark}
\newtheorem{rem}[thm]{Remark}
\numberwithin{equation}{section}
\begin{document}

\setcounter{page}{1}

\noindent \textbf{{\footnotesize  Journal of Algebra and Related Topics \\
Vol. XX, No XX, (201X), pp XX-XX}}\\[1.00in]
\title[I-prime]{I-prime ideals}
\author[Akray]{Ismael Akray}
\thanks{{\scriptsize
\hskip -0.4 true cm MSC(2010): Primary: 13A15; 
\newline Keywords: Prime ideal, weakly prime ideal, almost prime ideal, radical of the ideal.\\
 }}
\begin{abstract}
In this paper, we introduce a new generalization of weakly prime ideals called $I$-prime. Suppose $R$ is a commutative ring with identity and $I$ a fixed ideal of $R$. A proper ideal $P$ of $R$ is $I$-prime if for $a, b \in R$ with $ab \in P-IP$ implies either $a \in P$ or $b \in P$. We give some characterizations of $I$-prime ideals and study some of its properties.  Moreover, we give conditions  under which $I$-prime ideals becomes prime or weakly prime and we construct the view of $I$-prime ideal in decomposite rings.

\end{abstract}

\maketitle

\section{Introduction}
\hskip0.6cm

Throughout this article, $R$ will be a commutative ring with identity. Prime ideals play an essential role in ring theory. A prime ideal $P$ of $R$ is a proper ideal $P$ with the property that for $a,b \in R$, $ab \in P$ implies $a \in P $ or $b \in P$. There are several ways to generalize the notion of a prime ideal. We could either restrict or enlarge where $a$ and/or $b$ lie or restrict or enlarge where $ab$ lies. In this article we will interested in a generalization obtained by restricting where $ab$ lies.  A proper ideal $P$ of $R$ is weakly prime if for $a,b \in R$ with $ab \in P-0$, either $a \in P$ or $b \in P$. Weakly prime ideals were studied in some detail by Anderson and Smith (2003) in \cite{and2003}. Thus any prime ideal is weakly prime. Bhatwadekar and Sharma (2005) in \cite{Sharma} recently deﬁned a proper ideal $I$ of an integral domain $R$ to be almost prime if for $a,b \in R $ with $ab \in I - I^2$, then either $a \in I$ or $b \in I$. This deﬁnition can obviously be made for any commutative ring $R$. Thus a weakly prime ideal is almost prime and any proper idempotent ideal is almost prime. Moreover, an ideal $I$ of $R$ is almost prime if and only if $I/I^2$ is a weakly prime ideal of $R/I^2$. Also almost prime ideals were generalized to $n$-almost prime as follows; a proper ideal $I$ is called $n$-almost prime ideal if for any $a,b \in R $ with $ab \in I - I^n$, then either $a \in I$ or $b \in I$. With weakly prime ideals and almost prime ideals in mind, we make the following deﬁnition. Let $R$ be a commutative ring and $I$ be a fixed ideal of $R$. Then a proper ideal $P$ of $R$ is called $I$-prime ideal if for $a, b \in R$, $ab \in P-IP$, implies $a \in P$ or $b \in P$. So every weakly prime and $n$-almost prime ideal is $I$-prime where $I$ taken to be zero or $P^{n-1}$ respectively. If $I=R$, then every ideal is $I$-prime, so we can assume $I$ to be proper ideal of $R$.
For more details see \cite{Kaplansky}.



\begin{exa}
Consider the ring $\textbf{Z}_{12}$ and take $P=I=<4>$. Then $P$ is $I$-prime which is not prime nor weakly prime.

\end{exa}

\section{Main Results}
\hskip 0.6cm
We begin with the following lemma.

	\begin{lem}
			Let $P$ be a proper ideal of a ring $R$. Then $P$ is an $I$-prime ideal if and only if $P/IP$ is weakly prime in $R/IP$. 
		\end{lem}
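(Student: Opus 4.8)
The plan is to translate the defining condition of $I$-primeness directly through the canonical quotient map $\pi\colon R \to R/IP$, $\pi(x) = x + IP =: \bar{x}$. Before starting I would record the inclusion $IP \subseteq P$: indeed $IP$ is generated by products $ip$ with $i \in I$ and $p \in P$, each of which lies in $P$ since $P$ is an ideal. This guarantees that $P/IP$ is a well-defined ideal of $R/IP$, and it is proper because $P \neq R$ forces $P/IP \neq R/IP$. The central observation is that for $a,b \in R$ the assertion ``$ab \in P - IP$'' is equivalent to ``$\overline{ab} \in P/IP$ and $\overline{ab} \neq \bar{0}$'', where $\bar{0} = 0 + IP$ is the zero of $R/IP$. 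Here $ab \in P$ is equivalent to $\overline{ab} \in P/IP$ (using $IP \subseteq P$), while $ab \notin IP$ is exactly $\overline{ab} \neq \bar{0}$.

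For the forward direction I would assume $P$ is $I$-prime and take $\bar{a}, \bar{b} \in R/IP$ with $\bar{a}\,\bar{b} \in (P/IP) - \{\bar{0}\}$. By the correspondence above this says precisely that $ab \in P - IP$, so $I$-primeness gives $a \in P$ or $b \in P$, whence $\bar{a} \in P/IP$ or $\bar{b} \in P/IP$. Thus $P/IP$ is weakly prime.

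For the converse I would assume $P/IP$ is weakly prime and take $a,b \in R$ with $ab \in P - IP$. Then $\bar{a}\,\bar{b} = \overline{ab}$ is a nonzero element of $P/IP$, so weak primeness yields $\bar{a} \in P/IP$ or $\bar{b} \in P/IP$. Pulling this back—again via $IP \subseteq P$, which ensures that a preimage of $P/IP$ lies in $P$ itself—gives $a \in P$ or $b \in P$, so $P$ is $I$-prime.

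Since both directions are reformulations of the same set-theoretic condition, I do not expect any genuine obstacle; the only point demanding care is the bookkeeping of the correspondence between the set difference $P - IP$ in $R$ and the nonzero elements of $P/IP$ in the quotient. In particular one must check that membership in $P/IP$ pulls back to membership in $P$ rather than to some larger set, and this is exactly the step where the inclusion $IP \subseteq P$ is needed.
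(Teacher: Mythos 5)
Your proposal is correct and follows essentially the same route as the paper's own proof: both directions are handled by translating the condition $ab \in P - IP$ into the statement that $\overline{ab}$ is a nonzero element of $P/IP$ under the quotient map $R \to R/IP$, exactly as the paper does. Your write-up is merely more explicit about the bookkeeping (the inclusion $IP \subseteq P$, properness of $P/IP$, and the pull-back of membership), which the paper leaves implicit.
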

		\begin{proof}
		$(\Rightarrow)$	Let $P$ be an $I$-prime in $R$. Let $ a, b \in R$ with $0\neq (a+IP) (b+IP)=ab+IP \in P/IP$. Then $ab \in P-IP$ implies $a \in P$ or $b \in P$, hence $a+IP \in P/IP$ or $b+IP \in P/IP$. So $P/IP$ is weakly prime ideal in $R/IP$. 
		
		$(\Leftarrow)$ Suppose that $P/IP$ is weakly prime in $R/IP$ and take $r, s \in R$ such that $rs \in P-IP$. Then $0\neq rs+IP=(r+IP)(s+IP)\in P/IP$ so $r+IP\in P/IP$ or $s+IP \in P/IP$. Therefore $r \in P$ or $s \in P$. Thus $P$ is an $I$-prime ideal in $R$.
		\end{proof}

	\begin{thm} \label{1}
		(1) Let $I\subseteq J$. If $P$ is $I$-prime ideal of a ring $R$, then it is $J$-prime.
		\\(2) Let $R$ be commutative ring and $P$ an $I$-prime ideal that is not prime, then $P^{2}\subseteq IP$. Thus, an $I$-prime ideal $P$ with $P^{2}\nsubseteq IP$ is prime.
	\end{thm}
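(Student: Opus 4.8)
The plan is to treat the two parts separately: part (1) is a short containment argument, and part (2) is the substantive one, which I would prove by contraposition.

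For part (1), the key observation is that $I \subseteq J$ forces $IP \subseteq JP$. Hence if $ab \in P - JP$ for some $a,b \in R$, then $ab \in P$ and $ab \notin JP \supseteq IP$, so in fact $ab \in P - IP$. Applying the hypothesis that $P$ is $I$-prime gives $a \in P$ or $b \in P$, which is exactly what is needed for $P$ to be $J$-prime. No case analysis is required.

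For part (2), I would prove the contrapositive of the equivalent reformulation: assuming $P^{2} \nsubseteq IP$, show that $P$ is prime. The cleanest route is to invoke the Lemma, namely that $P$ is $I$-prime if and only if $P/IP$ is weakly prime in $R/IP$, together with the standard correspondence (valid since $IP \subseteq P$) that $P$ is prime in $R$ if and only if $P/IP$ is prime in $R/IP$. Under this correspondence the condition $P^{2} \nsubseteq IP$ translates into $(P/IP)^{2} \neq 0$, so part (2) follows from the known fact of Anderson and Smith that a weakly prime ideal whose square is nonzero is prime, applied in the quotient ring $R/IP$. If instead a self-contained argument is wanted, I would take $ab \in P$ and split on whether $ab \in IP$. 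If $ab \notin IP$, then $I$-primeness gives $a \in P$ or $b \in P$ at once. If $ab \in IP$, I argue by contradiction, assuming $a,b \notin P$, and perturb: if $aP \nsubseteq IP$ choose $p \in P$ with $ap \notin IP$ and consider $a(b+p)=ab+ap \in P \setminus IP$; symmetrically if $bP \nsubseteq IP$; and if both $aP, bP \subseteq IP$ then, since $P^{2}\nsubseteq IP$, choose $p,q \in P$ with $pq \notin IP$ and consider
\[
(a+p)(b+q) = ab + aq + pb + pq ,
\]
whose first three terms lie in $IP$ while $pq$ does not, so the product lies in $P \setminus IP$. In each case $I$-primeness forces $a \in P$ or $b \in P$, contradicting the choice of the pair.

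The main obstacle is precisely the final subcase $aP, bP \subseteq IP$: here no single perturbation of $a$ or of $b$ alone escapes $IP$, and one must use the hypothesis $P^{2}\nsubseteq IP$ to manufacture a product that lies outside $IP$ while staying inside $P$. The delicate point is the bookkeeping in the expansion above, verifying that exactly three of the four terms fall into $IP$ so that the perturbed product lands in $P - IP$; everything else is routine.
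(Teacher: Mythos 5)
Your proposal is correct, and part (1) matches the paper's one-line observation that $I\subseteq J$ forces $P-JP\subseteq P-IP$. For part (2), your ``self-contained'' fallback is in fact the paper's own proof almost verbatim: the paper takes $ab\in P$, disposes of the case $ab\notin IP$ directly, then reduces to $aP\subseteq IP$ and $bP\subseteq IP$ via the perturbation $a(x+b)$, and finally uses $P^{2}\nsubseteq IP$ to pick $y,z\in P$ with $yz\notin IP$ and concludes from $(a+y)(b+z)\in P-IP$ --- exactly your four-term expansion, only phrased as a direct argument rather than by contradiction. Your primary route, however, is genuinely different: reducing modulo $IP$ via the Lemma (which the paper proves before this theorem, so the appeal is legitimate), noting that $P^{2}\nsubseteq IP$ is equivalent to $(P/IP)^{2}\neq 0$ in $R/IP$, and invoking the Anderson--Smith theorem that a weakly prime ideal with nonzero square is prime, together with the correspondence of primes under the quotient $R\to R/IP$ (valid precisely because $IP\subseteq P$, as you note). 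This reduction is shorter and makes transparent that Theorem~\ref{1}(2) is nothing but the weakly prime case of \cite{and2003} transported along the quotient; what it costs is reliance on that external result, whereas the paper's direct argument is self-contained --- in effect the Anderson--Smith perturbation argument rerun inside $R$ with $0$ replaced by $IP$. Both routes are sound, so your proposal stands either way.
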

	
	\begin{proof}
	(1) The proof come from the fact that if $I\subseteq J$, then $P-JP \subseteq P-IP$.
		(2) Suppose that $P^{2}\nsubseteq IP$, we show that $P$ is prime. Let $ab\in P$ for $a,b\in R$. If $ab\notin IP$, then $P$ $I$-prime gives $a\in P$ or $b\in P$. So assume that $ab\in IP$. First, suppose that $aP\nsubseteq IP$; say $ax\notin IP$ for some $x \in P$. Then $a(x+b)\in P-IP$. So $a\in P$ or $x+b\in P$ and hence $a\in P$ or $b\in P$. So we can assume that $aP\subseteq IP$ in similar way we can assume that $bP\subseteq IP$. Since $P^{2}\nsubseteq IP$, there exist $y,z\in P$ with $yz\notin IP$. Then $(a+y)(b+z)\in P-IP$. So $P$ $I$-prime gives $a+y\in P$ or $b+z\in P$; Hence $a\in P$ or $b\in P$. Therefore $P$ is prime.
	\end{proof}

In the following we give a counter example on the converse of part (1) of Theorem \ref{1}.
\begin{exa}
In the ring $\textbf{Z}_{12}[x]$, put $I=0$, $J=<4>$ and $P=<4x>$. Then $P-IP=<4x>-0$ and $P-JP=<4x>-<4><4x>= \phi $. Hence $P$ is $J$-prime but not $I$-prime.

\end{exa}	
	
	\begin{cor}
		Let $P$ be an $I$-prime ideal of a ring $R$ with $IP\subseteq P^{3}$. Then $P$ is $\cap _{i=1}^{\infty} P^{i}$-prime. 
	\end{cor}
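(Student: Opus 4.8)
The plan is to set $J = \cap_{i=1}^{\infty} P^{i}$ and to verify directly that $P$ satisfies the defining condition of a $J$-prime ideal, namely that $ab \in P - JP$ forces $a \in P$ or $b \in P$. Since the hypothesis $IP \subseteq P^{3}$ behaves differently depending on whether $P$ is prime, I would split the argument into two cases according to whether $P$ is prime.

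If $P$ is prime, the conclusion is immediate: for any ideal $J$ we have $P - JP \subseteq P$, so $ab \in P - JP$ already gives $ab \in P$, and primeness yields $a \in P$ or $b \in P$. Thus every prime ideal is $J$-prime, and in particular $\cap_{i=1}^{\infty} P^{i}$-prime.

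The substantive case is when $P$ is not prime. Here I would invoke part (2) of Theorem \ref{1}, which says that an $I$-prime ideal that is not prime satisfies $P^{2} \subseteq IP$. Combining this with the hypothesis $IP \subseteq P^{3}$ and the trivial inclusion $P^{3} \subseteq P^{2}$ produces the chain $P^{2} \subseteq IP \subseteq P^{3} \subseteq P^{2}$, which collapses to the equalities $P^{2} = IP = P^{3}$. A short induction then gives $P^{i} = P^{2}$ for all $i \geq 2$, so the descending intersection is $J = \cap_{i=1}^{\infty} P^{i} = P^{2} = IP$.

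It then remains to compare the two membership conditions. From $J = P^{2}$ I get $JP = P^{3} = P^{2} = IP$, so that $P - JP = P - IP$. Hence the defining condition for $P$ to be $J$-prime is literally the same as that for $P$ to be $I$-prime, which holds by hypothesis; therefore $P$ is $\cap_{i=1}^{\infty} P^{i}$-prime. The main point to get right is the non-prime case: the real work is recognizing that Theorem \ref{1}(2) forces the chain of inclusions to collapse, so that all higher powers of $P$ coincide and $JP = IP$. Note that one cannot simply appeal to part (1) of Theorem \ref{1}, since $I \subseteq J$ need not hold; the argument instead shows that the two relevant difference sets are equal.
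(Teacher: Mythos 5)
Your proof is correct and follows essentially the same route as the paper's: split on whether $P$ is prime, use Theorem \ref{1}(2) to collapse the chain $P^{2}\subseteq IP\subseteq P^{3}\subseteq P^{2}$, deduce $\cap_{i=1}^{\infty}P^{i}=P^{2}$ and $(\cap_{i=1}^{\infty}P^{i})P=P^{3}=IP$, and transfer the $I$-prime condition verbatim. Your closing remark that one cannot simply invoke Theorem \ref{1}(1) because $I\subseteq\cap_{i=1}^{\infty}P^{i}$ need not hold is a nice clarification, but the substance matches the paper.
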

	\begin{proof}
		If $P$ is prime, then $P$ is $\cap _{i=1}^{\infty} P^{i}$-prime. Assume that $P$ is not prime. By Theorem \ref{1} $P^{2}\subseteq IP\subseteq P^{3}$. Thus $IP=P^{n}$ for each $n\geq 2$. So $\cap _{i=1}^{\infty} P^{i}=P \cap P^2=P^2$ and $(\cap _{i=1}^{\infty} P^{i}) P=P^2 P=P^3=IP$. Hence $P$ $I$-prime implies $P$ is $\cap _{i=1}^{\infty} P^{i}$-prime. 
	\end{proof}

	\begin{rem}
		Let $P$ be $I$-prime ideal. Then $P\subseteq \sqrt{IP}$ or $\sqrt{IP}\subseteq P$. If $P \subsetneqq \sqrt{IP}$, then $P$ is not prime since otherwise $IP\subseteq P$ implies $\sqrt{IP}\subseteq \sqrt{P}=P$. While if $\sqrt{IP}\subsetneqq P$, then $P$ is prime. 
	\end{rem}
	
	\begin{cor}
		Let $P$ be $I$-prime ideal of a ring $R$ which is not prime. Then $\sqrt{P} = \sqrt{IP}$
			\end{cor}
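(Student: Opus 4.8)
The plan is to establish the equality $\sqrt{P}=\sqrt{IP}$ by proving the two inclusions separately, each of which reduces to a containment of ideals followed by an application of the radical. The only nontrivial input I expect to need is part (2) of Theorem \ref{1}, which I am entitled to invoke since $P$ is assumed $I$-prime but not prime.

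First I would dispose of the easy inclusion $\sqrt{IP}\subseteq\sqrt{P}$. This rests on the observation that $IP\subseteq RP=P$, so $IP\subseteq P$ holds for any ideals $I$ and $P$ with no hypotheses at all; applying the radical (which is monotone under inclusion) gives $\sqrt{IP}\subseteq\sqrt{P}$ immediately.

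Next I would handle the reverse inclusion $\sqrt{P}\subseteq\sqrt{IP}$, which is where the hypothesis that $P$ is not prime enters. By Theorem \ref{1}(2), since $P$ is $I$-prime and not prime we have $P^{2}\subseteq IP$. Taking radicals and using monotonicity yields $\sqrt{P^{2}}\subseteq\sqrt{IP}$. I would then invoke the standard identity $\sqrt{P^{2}}=\sqrt{P}$ (more generally $\sqrt{P^{n}}=\sqrt{P}$ for every $n\geq 1$), which converts this into $\sqrt{P}\subseteq\sqrt{IP}$. Combining the two inclusions gives the claimed equality.

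I do not anticipate a genuine obstacle here: once Theorem \ref{1}(2) supplies $P^{2}\subseteq IP$, the argument is purely formal and relies only on monotonicity of the radical and the fact that the radical is insensitive to powers. The single point one must be careful about is to cite the non-primeness of $P$ explicitly, since that is precisely the hypothesis that activates $P^{2}\subseteq IP$; without it the reverse inclusion can fail.
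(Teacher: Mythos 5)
Your proof is correct and follows exactly the same route as the paper's: invoke Theorem \ref{1}(2) to get $P^{2}\subseteq IP$, deduce $\sqrt{P}=\sqrt{P^{2}}\subseteq\sqrt{IP}$, and note that the reverse containment $\sqrt{IP}\subseteq\sqrt{P}$ holds trivially since $IP\subseteq P$. The only difference is that you spell out the trivial inclusion and the identity $\sqrt{P^{2}}=\sqrt{P}$ in more detail than the paper does.
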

			
			\begin{proof}
			By Theorem \ref{1}, $P^2 \subseteq IP$ and hence $\sqrt{P} =\sqrt{P^2} \subseteq \sqrt{IP}$. The other containment always holds.
		
			\end{proof}

		Now we give a way to construct $I$-prime ideals $P$ when $\cap _{i=1}^{\infty} P^{i}\subseteq IP \subseteq P^{3}$. 
	
	\begin{rem}
		Assume that $P$ is $I$-prime, but not prime. Then by Theorem \ref{1}, if $IP\subseteq P^{2}$, then $P^{2}=IP$. In particular, if $P$ is weakly prime ($0$-prime) but not prime, then $P^{2}=0$. Suppose that $IP\subseteq P^{3}$. Then $P^{2}\subseteq IP\subseteq P^{3}$; So $P^{2}=P^{3}$ and thus $P^{2}$ is idempotent. 
	\end{rem}
	
	\begin{thm} \label{4}
		(1) Let $R,S$ be commutative rings and $P$ $0$-prime ideal of $R$. Then $P\times S$ is $I$-prime ideal of $R \times S$ for each ideal $I$ of $R \times S$ with $\cap _{i=1} ^{\infty} (P\times S)^i\subseteq I(P\times S)\subseteq P\times S$. 
		\\(2) Let $P$ be finitely generated proper ideal of commutative ring $R$. Assume $P$ is $I$-prime where $IP\subseteq P^{3}$. Then either $P$ is $0$-prime or $P^{2}\neq 0$ is idempotent and $R$ decomposes as $T \times S$ where $S=P^2$ and $P=J \times S$ where $J$ is $0$-prime. Thus $P$ is $I$-prime for $\cap _{i=1} ^{\infty} P^i \subseteq IP \subseteq P$.
	\end{thm}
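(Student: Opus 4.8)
The plan is to treat the two parts separately, using only elementary facts about products of rings together with Theorem \ref{1}, which supplies the structural constraint $P^2 \subseteq IP$.

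For part (1) I would first record two computations in $R \times S$. Since every ideal of a product of rings with identity splits, write $I = I_1 \times I_2$ with $I_1$ an ideal of $R$ and $I_2$ an ideal of $S$; and since $1 \in S$ gives $S^i = S$, one has $(P \times S)^i = P^i \times S$. Consequently $I(P \times S) = I_1 P \times I_2$ and $\bigcap_{i=1}^\infty (P \times S)^i = (\bigcap_{i=1}^\infty P^i) \times S$. The hypothesis $\bigcap_i (P\times S)^i \subseteq I(P\times S) \subseteq P\times S$ then forces $I_2 = S$, so that $I(P\times S) = I_1 P \times S$. Now given $(a,s)(b,t) \in (P\times S) - I(P\times S)$, the second coordinate is irrelevant and we get $ab \in P$ with $ab \notin I_1 P$; as $0 \in I_1 P$ this already yields $ab \neq 0$, whence weak primeness of $P$ gives $a \in P$ or $b \in P$, i.e. $(a,s) \in P\times S$ or $(b,t) \in P\times S$. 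Thus $P \times S$ is $I$-prime.

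For part (2), if $P$ is $0$-prime there is nothing to prove, so I would assume it is not. Prime ideals being $0$-prime, $P$ is then not prime, and Theorem \ref{1}(2) gives $P^2 \subseteq IP$; combined with $IP \subseteq P^3 \subseteq P^2$ this forces $P^2 = IP = P^3$, so $P^2$ is idempotent. If $P^2 = 0$ then $IP = 0$ and $P$ would be $0$-prime, contrary to assumption; hence $P^2 \neq 0$. The decisive step is then to invoke finite generation: $P^2$ is a finitely generated idempotent ideal, so the determinant-trick (Nakayama) argument produces an idempotent $e$ with $P^2 = (e) = eR$. This gives a decomposition $R \cong T \times S$ with $S = eR = P^2$ and $T = (1-e)R$.

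It remains to locate $P$ in this decomposition. Since $\{0\}\times S$ corresponds to $eR = P^2 \subseteq P$, writing the ideal $P$ as a product $J \times B$ forces $B = S$, so $P = J \times S$ with $J$ an ideal of $T$. To see $J$ is $0$-prime in $T$, use $IP = P^2 = \{0\}\times S$: for $a,b \in T$ with $0 \neq ab \in J$, the element $(a,0)(b,0) = (ab,0)$ lies in $P - IP$, so $I$-primeness of $P$ forces $(a,0)\in P$ or $(b,0)\in P$, i.e. $a \in J$ or $b \in J$. Finally, the closing assertion that $P = J \times S$ is $I$-prime whenever $\bigcap_i P^i \subseteq IP \subseteq P$ is precisely part (1) applied to the $0$-prime ideal $J$ of $T$. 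I expect the idempotent-generation step to be the main obstacle, since it is exactly where the finite generation of $P$ is indispensable; the remaining work is bookkeeping with the product decomposition and transporting the $I$-prime condition across it.
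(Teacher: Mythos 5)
Your proof is correct in both parts, but the two parts compare differently with the paper's own argument. For part (2) you follow essentially the paper's path: not $0$-prime forces not prime, Theorem \ref{1}(2) gives $P^{2}\subseteq IP\subseteq P^{3}$, hence $P^{2}=IP=P^{3}$ is a nonzero idempotent ideal, finite generation produces an idempotent generator $e$ (exactly where the paper also invokes finite generation), and your verification that $J$ is $0$-prime is the paper's computation with the test elements $(a,1),(b,1)$ replaced by $(a,0),(b,0)$ --- an immaterial difference; the closing reduction of the final claim to part (1) is also how the paper intends it. For part (1), however, your route is genuinely different and cleaner. The paper splits into cases according to whether $P$ is prime: when $P$ is not prime it invokes the earlier remark that a weakly prime non-prime ideal satisfies $P^{2}=0$, computes $\cap_{i=1}^{\infty}(P\times S)^{i}=0\times S$, notes that $P\times S - 0\times S=(P-0)\times S$ so that $P\times S$ is ``$\cap$-prime'', and then enlarges to $I$ by the monotonicity of the defining condition (as in Theorem \ref{1}(1)). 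You instead split the ideal as $I=I_{1}\times I_{2}$, observe that the hypothesis forces $I_{2}=S$ (apply the containment to elements $(0,y)$), and verify the definition directly: $ab\in P-I_{1}P$ forces $ab\neq 0$ because $0\in I_{1}P$, so weak primeness of $P$ finishes. This avoids the prime/non-prime dichotomy and the $P^{2}=0$ remark entirely, and it exposes that the only part of the hypothesis actually used is $0\times S\subseteq I(P\times S)$, i.e. $I_{2}=S$ --- the first-coordinate containment $\cap_{i=1}^{\infty}P^{i}\subseteq I_{1}P$ is never needed --- so your argument in fact proves a slightly stronger statement. What the paper's case analysis buys in exchange is an explanation of where the bound $\cap_{i=1}^{\infty}(P\times S)^{i}$ comes from: when $P$ is not prime this intersection is exactly $0\times S$, the natural threshold below which the conclusion fails.
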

	\begin{proof}
		(1) Let $R$ and $S$ be commutative rings and $P$ be weakly prime ($0$-prime) ideal of $R$. Then $P\times S$ need not be a $0$-prime ideal of $R\times S$; In fact, $P\times S$ is $0$-prime if and only if $P\times S$ (or equivalently $P$) is prime [see Anderson 2003]. However, $P\times S$ is $I$-prime for each $I$ with $\cap _{i=1}^{\infty} (P\times S)^{i}\subseteq I(P\times S)$. If $P$ is prime , then $P\times S$ is prime ideal and thus is $I$-prime for all $I$. Assume that $P$ is not prime. Then  $P^{2}=0$ and $(P\times S)^{2}=0\times S$. Hence $\cap _{i=1}^{\infty} (P\times S)^{i}= \cap_{i=1}^{\infty} P^{i}\times S=0\times S $. Thus $P\times S -\cap _{i=1}^{\infty} (P\times S)^{i} = P\times S - 0\times S=(P-0)\times S$. Since $P$ is weakly prime, $P\times S$ is $\cap _{i=1}^{\infty} (P\times S)^{i}$-prime and as $\cap _{i=1}^{\infty} (P\times S)^{i}\subseteq I(P\times S)$, $P\times S$ is $I$-prime. 
		
		(2) If $P$ is prime, then $P$ is $0$-prime. So we can assume that $P$ is not prime. Then $P^{2} \subseteq IP$ and hence $P^{2} \subseteq IP \subseteq P^{3}$. So $P^{2}=P^{3}$. Hence $P^{2}$ is idempotent. Since $P^{2}$ is finitely generated, $P^{2}=<e>$ for some idempotent $e \in R$. Suppose $P^{2}=0$. Then $IP \subseteq P^3 =0$. So $IP=0$ and hence $P$ is $0$-prime. So assume $P^{2} \neq 0$. Put $S=P^{2}=Re$ and $T = R(1-e)$, so $R$ decomposesas $T \times S$ where $S=P^{2}$. Let $J=P(1-e)$, so $P=J \times S$ where $J^2 =(P(1-e))^2=P^2(1-e)^2=<e> (1-e)=0$. We show that $J$ is $0$-prime. Let $ab \in J^2-0$, so $(a,1)(b,1)=(ab,1) \in J \times S-(J \times S)^2= J \times S - 0\times S \subseteq P - IP$ since $IP\subseteq P^{3}$ implies $IP \subseteq P^3 =(J \times S)^3 = 0 \times S$. Hence $(a,1) \in P$ or $(b,1) \in P$ so $ a \in J$ or $b \in J$. Therefore $J$ is weakly prime.
	\end{proof}
	
	\begin{cor}
		Let $R$ be an indecomposable commutative ring and $P$ a finitely generated $I$-prime ideal of $R$, where $IP\subseteq P^{3}$. Then $P$ is weakly prime. 
	\end{cor}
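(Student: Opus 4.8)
The plan is to reduce immediately to Theorem~\ref{4}(2) and then eliminate its decomposable alternative using the indecomposability of $R$. Since $P$ is a finitely generated $I$-prime ideal with $IP \subseteq P^{3}$, Theorem~\ref{4}(2) furnishes a dichotomy: either $P$ is $0$-prime (which is exactly what weakly prime means), or $P^{2} \neq 0$ is idempotent, $R$ decomposes as $T \times S$ with $S = P^{2} = Re$ for an idempotent $e$ and $T = R(1-e)$, and $P = J \times S$ with $J$ a $0$-prime ideal.

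My task is then to show that the second alternative is incompatible with $R$ being indecomposable. The key point is to verify that the idempotent $e$ providing the decomposition is genuinely nontrivial. First, $e \neq 0$, since otherwise $S = Re = 0$, contradicting $P^{2} = S \neq 0$. Second, $e \neq 1$, since otherwise $P^{2} = Re = R$, and because $P \supseteq P^{2}$ this would force $P = R$, contradicting that $P$ is a proper ideal. With $0 \neq e \neq 1$ in hand, both factors $T = R(1-e)$ and $S = Re$ are nonzero, so $R = T \times S$ is a genuine nontrivial direct product.

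This directly contradicts the hypothesis that $R$ is indecomposable. Hence the decomposable alternative of Theorem~\ref{4}(2) is ruled out, and we are left with the first one: $P$ is $0$-prime, that is, weakly prime, as claimed.

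The argument is short and I do not anticipate any serious obstacle. The only substantive step is checking the nontriviality of the idempotent $e$, which reduces to the elementary observation that $P^{2} = R$ is impossible for a proper ideal; all of the structural work (producing the idempotent and the splitting $R = T \times S$) has already been carried out in Theorem~\ref{4}(2).
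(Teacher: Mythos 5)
Your proof is correct and follows the paper's intended route: the corollary is stated in the paper without proof precisely because it is the immediate consequence of Theorem~\ref{4}(2) that you spell out, with indecomposability ruling out the alternative $R \cong T \times S$. Your verification that the idempotent $e$ is nontrivial (so the splitting is genuine) is a worthwhile detail the paper leaves implicit.
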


	\begin{cor}
		Let $R$ be a Noetherian integral domain. A proper ideal $P$ of $R$ is prime if and only if $P$ is $P^{2}$-prime. 
	\end{cor}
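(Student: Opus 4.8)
The forward implication is immediate and requires no hypotheses on $R$: if $P$ is prime, then for any ideal $I$ the inclusion $P - IP \subseteq P$ shows that $ab \in P - IP$ forces $ab \in P$, whence $a \in P$ or $b \in P$; so $P$ is $I$-prime for \emph{every} $I$, in particular for $I = P^2$. The entire content of the corollary is therefore the converse, and my plan is to prove it by contradiction, exploiting the structural restriction already recorded in Theorem \ref{1}(2) together with the Noetherian and domain hypotheses.

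So assume $P$ is $P^2$-prime but, contrary to what we want, not prime. Note that ``$P^2$-prime'' is exactly ``$I$-prime with $I = P^2$'', for which $IP = P^2 P = P^3$. Applying Theorem \ref{1}(2) with this $I$, the failure of primality yields $P^2 \subseteq IP = P^3$. Since the reverse inclusion $P^3 \subseteq P^2$ is automatic, we obtain the equality $P^2 = P^3$, that is, $P^2 = P \cdot P^2$.

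It remains to derive a contradiction from $P^2 = P \cdot P^2$, and this is where both standing hypotheses enter; I expect it to be the only substantive step. Because $R$ is Noetherian, $P^2$ is a finitely generated $R$-module, so the equality $P^2 = P \cdot P^2$ permits the determinant trick (Cayley--Hamilton / Nakayama): there exists $a \in P$ with $(1+a)P^2 = 0$. Equivalently one may invoke the Krull intersection theorem, since $P^2 = P^3 = P^4 = \cdots$ forces $P^2 \subseteq \cap_{i=1}^{\infty} P^i = 0$ in a Noetherian domain. Either way, the relation $(1+a)P^2 = 0$ lives inside the integral domain $R$; as $P$ is proper we have $a \neq -1$, so $1 + a \neq 0$, and the absence of zero divisors forces $P^2 = 0$, whence one further application of the domain property gives $P = 0$. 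But the zero ideal of an integral domain is prime, contradicting the assumption that $P$ is not prime. Hence $P$ must be prime. The delicate point to phrase carefully is precisely the passage $P^2 = P \cdot P^2 \Rightarrow P^2 = 0$: it is here, and essentially only here, that ``Noetherian'' (finite generation, to license Nakayama or Krull intersection) and ``integral domain'' (to cancel the factor $1+a$ and then to pass from $P^2 = 0$ to $P = 0$) are genuinely needed.
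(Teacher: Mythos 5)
Your proof is correct, but it takes a different route from the one the paper intends. The paper gives no written proof of this corollary: it is placed directly after Theorem \ref{4}(2) and the corollary on indecomposable rings, and the intended derivation is through that machinery. Namely, in a Noetherian integral domain every ideal is finitely generated and the ring is indecomposable (a domain has no nontrivial idempotents), and with $I=P^2$ one has $IP=P^3\subseteq P^3$; so Theorem \ref{4}(2) rules out the branch in which $P^2\neq 0$ is idempotent and $R$ decomposes as a product, forcing $P$ to be weakly prime; then weakly prime but not prime would give $P^2=0$ (Theorem \ref{1}(2) with $I=0$), hence $P=0$, which is prime in a domain. Your argument short-circuits this decomposition detour: you apply Theorem \ref{1}(2) directly with $I=P^2$ to get $P^2=P^3=P\cdot P^2$, and then kill $P^2$ by the determinant trick (Nakayama) or by Krull intersection, using properness to get $1+a\neq 0$ and the domain hypothesis to cancel it and to pass from $P^2=0$ to $P=0$, closing the contradiction since $0$ is prime in a domain. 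The underlying ingredients are the same in both routes (finite generation to exploit $P^2=P\cdot P^2$, and absence of zero divisors/idempotents), but yours is more self-contained and elementary, while the paper's route exhibits the corollary as a genuine specialization of its structure theorem; in particular your version never needs the notion of weakly prime at all. Both handlings of the delicate step are valid, and your explicit justification that $a\neq -1$ (from properness of $P$) is exactly the point that must not be skipped.
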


	\begin{thm}
		Let $a$ be a non-unit element in $R$.
		\\ (1) Let $(0:a)\subseteq (a)$. Then $(a)$ is $I$-prime for $I(a)\subseteq (a)^{2}$ if and only if $(a)$ is prime.
		\\ Let $(R,m)$ be quasi-local ring. Then \\
		(2) $(a)$ is $I$-prime for $I(a)\subseteq (a)^{3}$ if and only if $(a)$ is $0$-prime.
		\\(3) $(a)$ is $m$-prime if and only if  $a$ is irreducible.
	\end{thm}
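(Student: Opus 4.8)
The plan is to treat the three biconditionals separately, noting first that in each one direction is essentially free. For (1), if $(a)$ is prime then it is $I$-prime for every ideal $I$, since $IP\subseteq P$ forces $P-IP\subseteq P$; for (2), if $(a)$ is $0$-prime then it is $I$-prime by Theorem \ref{1}(1) applied to $0\subseteq I$. So the substance lies in the forward implications, together with a direct treatment of (3). Throughout I will use that in a quasi-local ring $(R,m)$ every non-unit lies in $m$, so $1-x$ is a unit whenever $x\in m$.

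For (1), assume $(a)$ is $I$-prime with $I(a)\subseteq (a)^2$ and $(0:a)\subseteq (a)$, and take $bc\in (a)$; I want $b\in (a)$ or $c\in (a)$. If $bc\notin I(a)$ the $I$-prime hypothesis finishes it, so assume $bc\in I(a)\subseteq (a)^2$, say $bc=a^2d$. The key move is to perturb $b$ to $b+a$: since $a\in (a)$ one has $b+a\in (a)$ iff $b\in (a)$, and $(b+a)c=a^2d+ac=a(ad+c)$ still lies in $(a)$. Now split on whether $(b+a)c$ escapes $I(a)$. If $(b+a)c\notin I(a)$, the $I$-prime hypothesis gives $b+a\in (a)$ or $c\in (a)$, hence $b\in (a)$ or $c\in (a)$. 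If instead $(b+a)c\in I(a)\subseteq (a)^2$, write $a(ad+c)=a^2e$; then $a(ad+c-ae)=0$, so $ad+c-ae\in (0:a)\subseteq (a)$, and since $ad,ae\in (a)$ this forces $c\in (a)$. Either way $(a)$ is prime.

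For (2), suppose $(a)$ is $I$-prime with $I(a)\subseteq (a)^3$; I may assume $(a)$ is not prime, since a prime principal ideal is already $0$-prime. Theorem \ref{1}(2) then gives $(a)^2\subseteq I(a)\subseteq (a)^3\subseteq (a)^2$, so $(a)^2=(a)^3$ and $a^2=a^3t$ for some $t$; thus $a^2(1-at)=0$ with $at\in m$, whence $1-at$ is a unit and $a^2=0$. Consequently $I(a)\subseteq (a)^3=0$, so the $I$-prime condition reads exactly as the $0$-prime condition and $(a)$ is $0$-prime. For (3), taking $a\neq 0$ as irreducibility requires, the forward direction factors $a=xy$ and observes that $a\notin m(a)$ (else $a(1-w)=0$ with $1-w$ a unit), so $a\in (a)-m(a)$ and $m$-primeness gives $x\in (a)$ or $y\in (a)$; writing $x=as$ yields $a(1-sy)=0$, forcing $sy$, and hence $y$, to be a unit (symmetrically if $y\in (a)$), so $a$ is irreducible. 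Conversely, given $a$ irreducible and $bc\in (a)-m(a)$, writing $bc=ra$ one sees $r\notin m$ (otherwise $bc\in m(a)$), so $r$ is a unit and $a=(r^{-1}b)c$; irreducibility makes one factor a unit, and in either case $b\in (a)$ or $c\in (a)$.

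The main obstacle is the forward direction of (1): the defining $I$-prime property is useless precisely when $bc\in I(a)$, and it is not transparent how $(0:a)\subseteq (a)$ should enter. The perturbation $b\mapsto b+a$ is the crux — it is engineered so that landing inside $I(a)$ produces an annihilator relation that $(0:a)\subseteq (a)$ converts into membership in $(a)$, while landing outside $I(a)$ is dispatched by the hypothesis itself. Parts (2) and (3) are comparatively routine once the quasi-local unit trick of turning $1-(\text{non-unit})$ into a unit is in hand.
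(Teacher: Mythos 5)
Your proposal is correct in all three parts, and parts (1) and (3) essentially coincide with the paper's own argument: in (1) the paper uses exactly your perturbation $b\mapsto b+a$ (it extracts $ac\in I(a)\subseteq (a)^2$ by subtracting $bc$, where you keep $(b+a)c=a(ad+c)$ intact, but the annihilator step via $(0:a)\subseteq (a)$ is identical), and in (3) the paper likewise rests on the observation that, for $a\neq 0$ in a quasi-local ring, $xy\in (a)-m(a)$ holds exactly when $xy=za$ with $z$ a unit. The genuine divergence is in (2): the paper deduces it as a corollary of its structure theorem (Theorem \ref{4}(2)), arguing that the alternative conclusion there --- $P^2\neq 0$ idempotent forcing a decomposition $R\cong T\times S$ --- is impossible because a quasi-local ring has no nontrivial idempotents; you instead argue directly from Theorem \ref{1}(2) that $(a)^2\subseteq I(a)\subseteq (a)^3$ gives $a^2=a^3t$, hence $a^2(1-at)=0$ with $1-at$ a unit, so $a^2=0$ and $I(a)\subseteq (a)^3=0$, collapsing $I$-primeness to $0$-primeness. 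Your route is more elementary and self-contained (the quasi-local hypothesis enters through the unit $1-(\text{non-unit})$ trick rather than through indecomposability), while the paper's route buys brevity by reusing machinery already established; in effect your computation re-proves, in the principal case, precisely the idempotent-triviality that the paper's citation supplies. A small additional merit of your write-up: you explicitly flag that (3) requires $a\neq 0$ (since $(0)$ is vacuously $m$-prime but $0$ is not irreducible), a point the paper's proof passes over silently.
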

	\begin{proof}
		(1) Suppose that (a) is $I$-prime and $bc\in (a)$. If $bc\notin I(a)$, then $b\in (a)$ or $c\in (a)$. So suppose that $bc\in I(a)$. Now $(b+a)c\in (a)$. If $(b+a)c\notin I(a)$, then $b+a\in(a)$ or $c\in (a)$ and hence $b\in (a)$ or $c\in(a)$. So assume that $(b+a)c\in I(a)$. Then $ac\in I(a)$ and hence $ac\in (a)^{2}$. So $ac=za^{2}$ and hence $c-za\in (0:a)$. Thus $c\in (0:a)+(a)=(a)$. The converse part is trivial since every prime ideal is $I$-prime.
		\\ (2) If $(a)$ is $0$-prime, then $(a)$ is $I$-prime for each $I$ with $I(a)\subseteq (a)^{3}$. Conversely, let $(a)$ be $I$-prime for $I(a)\subseteq (a)^{3}$. Since a quasi local ring has no nontrivial idempotents, $(a)$ is $0$-prime by Theorem \ref{4} part(2). 
		\\(3) If $a$ is irreducible means that $a=xy$ implies that $x\in(a)$ or $y\in (a)$ and $(a)$ is $m$-prime means that $xy\in (a)-m(a)$ which implies that $x\in (a)$ or $y\in (a)$. But $xy \in (a)-m(a)$ if and only if $xy=za$ for some unit $z\in R$ if and only if $a=z^{-1}xy$ for some unit $z^{-1}\in R$. Thus $(a)$ is $m$-prime if and only if $a=xy$ implies $x\in (a)$ or $y\in (a)$.
	\end{proof}

	We now give some characterizations of $I$-prime ideals.
	
	\begin{thm} \label{13}
		Let $P$ be a proper ideal of $R$. Then the following are equivalent:
		\\ (1) $P$ is $I$-prime.
		\\(2) For $x\in R-P$, $(P:x)=P\cup (IP:x)$
		\\ (3) For $x\in R-P$, $(P:x)=P$ or $(P:x)=(IP:x)$
		\\(4) For ideals $J$ and $K$ of $R$, $JK\subseteq P$ and $JK\nsubseteq IP$ imply $J\subseteq P$ or $K\subseteq P$. 
	\end{thm}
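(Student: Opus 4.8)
The plan is to establish the cycle $(1)\Rightarrow(2)\Rightarrow(3)\Rightarrow(1)$ and then prove the two implications $(4)\Rightarrow(1)$ and $(1)\Rightarrow(4)$ separately; throughout, $(P:x)=\{r\in R:rx\in P\}$ denotes the usual residual ideal. I would record at the outset the inclusion $P\cup(IP:x)\subseteq(P:x)$, which holds for every $x$ since $P\subseteq(P:x)$ and $IP\subseteq P$ forces $(IP:x)\subseteq(P:x)$; this makes $(1)\Rightarrow(2)$ a matter of proving only the reverse inclusion.

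For $(1)\Rightarrow(2)$ I would fix $x\in R-P$ and take $r\in(P:x)$, so that $rx\in P$. If $rx\in IP$ then $r\in(IP:x)$; if instead $rx\notin IP$, then $rx\in P-IP$, and $I$-primeness together with $x\notin P$ forces $r\in P$. Hence $(P:x)\subseteq P\cup(IP:x)$, giving equality. For $(2)\Rightarrow(3)$ the key observation is purely group-theoretic: $(P:x)$, $P$ and $(IP:x)$ are all ideals, hence additive subgroups, and an additive group that equals the union of two of its subgroups must coincide with one of them; applying this to $(P:x)=P\cup(IP:x)$ yields $(P:x)=P$ or $(P:x)=(IP:x)$. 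For $(3)\Rightarrow(1)$ I would take $ab\in P-IP$ with, say, $b\notin P$, so that $a\in(P:b)$; since $ab\notin IP$ we cannot have $(P:b)=(IP:b)$, so $(P:b)=P$ and therefore $a\in P$.

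The implication $(4)\Rightarrow(1)$ is immediate on taking the principal ideals $J=(a)$ and $K=(b)$: then $JK=(ab)\subseteq P$ while $ab\notin IP$ gives $JK\nsubseteq IP$, so $(4)$ returns $a\in P$ or $b\in P$. For $(1)\Rightarrow(4)$ I would argue by contraposition: assuming $JK\subseteq P$ with $J\nsubseteq P$ and $K\nsubseteq P$, the goal is to deduce $JK\subseteq IP$. Fixing $a\in J-P$ and $b\in K-P$, $I$-primeness applied to $ab\in P$ with $a,b\notin P$ forces $ab\in IP$. The heart of the argument is then to promote this to $xy\in IP$ for all $x\in J$ and $y\in K$ by a case split on whether $x\in P$ and whether $y\in P$: when a factor lies in $P$ one shifts it by a non-element of $P$ (replacing $x$ by $x+a$ and/or $y$ by $y+b$), applies $I$-primeness to the resulting product, which lies in $P$ and is therefore in $IP$ unless it lands in $P-IP$, and then cancels the cross terms $ab$, $ay$, $xb$ — each of which is shown to lie in $IP$ along the way — to recover $xy\in IP$.

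The main obstacle is precisely this translation step in $(1)\Rightarrow(4)$: the element-wise definition only controls a single product, so upgrading it to an inclusion of ideals requires the bookkeeping of the four subcases and the repeated ``shift by a non-element of $P$'' device, with care taken that every cross term removed has itself already been placed in $IP$ before it is cancelled. Once the subgroup-union lemma is invoked in $(2)\Rightarrow(3)$, the rest of the cycle is routine.
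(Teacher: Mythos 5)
Your proof is correct, but it organizes the equivalence differently from the paper and proves the hard implication into (4) by a different device. The paper closes a single cycle $(1)\Rightarrow(2)\Rightarrow(3)\Rightarrow(4)\Rightarrow(1)$, so statement (4) is derived from the colon-ideal dichotomy (3) rather than from the definition: for $r\in J-P$, the inclusion $K\subseteq(P:r)$ together with $K\nsubseteq P$ rules out $(P:r)=P$, hence $(P:r)=(IP:r)$ and $rK\subseteq IP$ in one stroke; only the case $r\in J\cap P$ requires the translation trick, and only in the single factor $r$ (choose $s\in J-P$, note $sK\subseteq IP$ and $(r+s)K\subseteq IP$ by the first case, and subtract). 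You instead close the shorter cycle $(1)\Rightarrow(2)\Rightarrow(3)\Rightarrow(1)$ and prove $(1)\Leftrightarrow(4)$ separately, with $(1)\Rightarrow(4)$ argued directly from the element-wise definition; this forces your four-subcase analysis with shifts in both factors, and your bookkeeping there is sound: in the doubly-bad case $x,y\in P$ one expands $(x+a)(y+b)=xy+xb+ay+ab$ and each of the three subtracted cross terms has already been placed in $IP$ by the earlier cases. What the paper's route buys is economy: the dichotomy in (3) disposes of all of $K$ at once, so the worst case needs only one shift and one cancellation. What your route buys is self-containment: the equivalence of the element-wise condition (1) with the ideal-wise condition (4) never passes through the colon-ideal characterizations, which is conceptually cleaner if (2) and (3) are regarded as side remarks. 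The essential trick --- shifting an element of $P$ by an element outside $P$ and cancelling --- is the same in both, so the difference is organizational rather than fundamental, but it is a genuine difference in the logical architecture.
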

	\begin{proof}
		(1) $ \Rightarrow$ (2) Suppose $r\in R-P$. Let $s\in (P:r)$, so $rs\in P$. If $rs\in P-IP$, then $s\in P$. If $rs\in IP$, then $s\in (IP:r)$, So $(P:r)\subseteq P\cup (IP:r)$. The other containment always holds. 
		\\(2) $ \Rightarrow $ (3) Note that if an ideal is a union of two ideals, then it is equal to one of them.
		\\(3) $ \Rightarrow $ (4) Let $J$ and $K$ be two ideals of $R$ with $JK\subseteq P$. Assume that $J\nsubseteq P$ and $K\nsubseteq P$. We claim that $JK\subseteq IP$. Suppose $r\in J$. First, Let $r \notin P$. Then $rK\subseteq P$ gives $K\subseteq (P:r)$. Now $K\nsubseteq P$, so $(P:r)=(IP:r)$. Thus $rK\subseteq IP$. Next, let $r\in J\cap P$. Choose $s\in J-P$. Then $r+s\in J-P$. So by the first case $sK\subseteq IP$ and so $(r+s)K\subseteq IP$. Let $t\in K$. Then $rt=(r+s)t-st\in IP$. So $rK\subseteq IP$. Hence $JK\subseteq IP$. 
		\\(4) $ \Rightarrow $ (1) Let $rs\in P-IP$. Then $(r)(s)\subseteq P$ but $(r)(s)\nsubseteq IP$. So $(r)\subseteq P$ or $(s)\subseteq P$; i-e. $r\in P$ or $s\in P$.
	\end{proof}
	
	\begin{cor}
		Suppose $P$ is $I$-prime ideal that is not prime. Then $P\sqrt{IP}\subseteq IP$.
	\end{cor}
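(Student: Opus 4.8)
The plan is to prove the containment elementwise and then invoke that $IP$ is an ideal. So fix $a \in P$ and $b \in \sqrt{IP}$; my target is $ab \in IP$, since products of this form generate the ideal $P\sqrt{IP}$ and $IP$ is closed under taking them. Because $IP \subseteq P$ always holds, the hypothesis $b \in \sqrt{IP}$ gives $b^m \in IP \subseteq P$ for some $m$, so there is a least integer $n \ge 1$ with $b^n \in P$. I would split on $n$. The case $n=1$ is immediate: then $b \in P$, and since $P$ is $I$-prime but not prime, Theorem \ref{1}(2) gives $P^2 \subseteq IP$, whence $ab \in P^2 \subseteq IP$.

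The substantive case is $n \ge 2$, where minimality of $n$ yields the two facts I will exploit: $b \notin P$ and $b^{n-1}\notin P$. The naive attempt here — observing $ab \in P$ and applying $I$-primeness to the product $a\cdot b$ — carries no information, because $a \in P$ already satisfies the required conclusion. I expect this to be the main obstacle, and the way around it is a perturbation trick: replace $a$ by an element that is provably \emph{outside} $P$, namely $a + b^{n-1}$ (which lies outside $P$ since $a \in P$ while $b^{n-1} \notin P$), so that $I$-primeness becomes genuinely constraining.

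Concretely I would make two applications of the $I$-prime property. First, $b^n = b\cdot b^{n-1} \in P$ with both factors outside $P$: were $b^n \in P - IP$, $I$-primeness would force $b \in P$ or $b^{n-1} \in P$, a contradiction, so $b^n \in IP$. Second, $b(a+b^{n-1}) = ab + b^n \in P$ (using $a\in P$ and $b^n\in P$) again has both factors outside $P$, so the same reasoning gives $ab + b^n \in IP$. Subtracting the two memberships yields $ab = (ab+b^n) - b^n \in IP$, completing the case $n\ge 2$. Since $ab \in IP$ in every case, $P\sqrt{IP}\subseteq IP$ follows.
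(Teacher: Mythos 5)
Your proof is correct, but it takes a genuinely different route from the paper's in the crucial case. The paper fixes $r\in\sqrt{IP}$ and, when $r\notin P$, invokes its colon-ideal characterization (Theorem \ref{13}): either $(P:r)=(IP:r)$, which immediately gives $rP\subseteq IP$ since $P\subseteq (P:r)$, or $(P:r)=P$, which it rules out by a contradiction built from the minimal $n$ with $r^n\in IP$. You never touch Theorem \ref{13}; instead you work from the bare definition of $I$-prime, taking $n$ minimal with $b^n\in P$ (rather than minimal with $b^n\in IP$), proving $b^n\in IP$ by applying $I$-primeness to $b\cdot b^{n-1}$, and then handling the real difficulty — that applying $I$-primeness to $a\cdot b$ is vacuous when $a\in P$ — via the perturbation $b(a+b^{n-1})=ab+b^n$, whose factors both lie outside $P$, so that $I$-primeness forces $ab+b^n\in IP$ and subtraction gives $ab\in IP$. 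The two proofs share the easy case ($b\in P$ handled by $P^2\subseteq IP$, Theorem \ref{1}(2)), and both ultimately rest on the same minimality-plus-shift idea; but yours is self-contained, needing only the definition and Theorem \ref{1}(2), and it recycles exactly the perturbation trick the paper itself uses to prove Theorem \ref{1}(2) (replacing an element of $P$ by one outside $P$ so the hypothesis has force), whereas the paper's proof is shorter at the cost of routing through the characterization theorem, which it thereby also showcases. One small stylistic gain on the paper's side: its contradiction in the case $(P:r)=P$ dispatches all powers at once, while your argument needs the two separate applications of $I$-primeness; one small gain on yours: no case split on which alternative of the colon dichotomy holds, and the argument is constructive rather than by contradiction.
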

	\begin{proof}
		Let $r\in \sqrt{IP}$. If $r\in P$, then $rP\subseteq P^{2}\subseteq IP$ by Theorem \ref{1}. So assume that $r\notin P$ by Theorem \ref{13}, $(P:r)=P$ or $(P:r)=(IP:r)$. As $P\subseteq (P:r)$, the last gives $rP\subseteq IP$. So assume that $(P:r)=P$. Let $r^{n}\in IP$, but $r^{n-1}\notin IP$. Then $r^{n}\in P$, so $r^{n-1}\in (P:r)=P$. Thus $r^{n-1}\in P-IP$, so $r\in P$ a contradiction. 
	\end{proof}

	It is known that if $S$ is a multiplicatively closed subset of a commutative ring $R$ and $P$ as a prime ideal of $R$ with $P\cap S= \phi$, then $S^{-1}P$ is a prime ideal of $S^{-1}R$ and $S^{-1}P\cap R=P$. The first result extended to weakly prime ideals in [2, Proposition 13] and to almost prime ideals in [5, Lemma 2.13]. Fix an ideal $I$ of $R$ we prove that if $P$ is $I$-prime with $P\cap S= \phi$, then $S^{-1}P$ is $S^{-1}I$-prime. Note that for an ideal $J$ of $R$ with $J \subseteq P$, $I(P/J)=(IP+J)/J$. If $P$ is prime (respectively, weakly prime, $n$-almost prime), then so is $P/J$. We generalize this result to $I$-prime ideals in the following proposition.
	
	\begin{prop}
		Let $R$ be a ring and $I$ be an ideal of $R$. Let $P$ be $I$-prime ideal of $R$. Then the following are true.
		\\(1) If $J$ is an ideal of $R$ with $J\subseteq P$, then $P/J$ is $I$-prime ideal of $R/J$. 
		\\(2) Assume that $S$ is multiplicatively closed subset of $R$ with $P\cap S=\phi$. Then $S^{-1}P$ is a $S^{-1}I$-prime ideal of $S^{-1}R$. If $S^{-1}P\neq S^{-1}(IP)$, then $S^{-1}P \cap R=P$.  
	\end{prop}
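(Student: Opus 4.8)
The plan is to verify the defining condition of $I$-primeness directly in each quotient and localization, reducing everything to the hypothesis that $P$ is $I$-prime in $R$; the only genuinely delicate point is the contraction identity at the end of (2). For (1), I would first note that $P/J$ is proper in $R/J$ because $P\subsetneq R$ and $J\subseteq P$. To check $I$-primeness, I would use the identity recorded just before the statement, $I(P/J)=(IP+J)/J$. So if $\bar a\,\bar b=\overline{ab}$ lies in $(P/J)-I(P/J)$, then $\overline{ab}\in P/J$ forces $ab\in P$ (using $J\subseteq P$), while $\overline{ab}\notin(IP+J)/J$ forces $ab\notin IP+J$, and in particular $ab\notin IP$ since $IP\subseteq IP+J$. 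Hence $ab\in P-IP$, and $I$-primeness of $P$ gives $a\in P$ or $b\in P$, i.e. $\bar a\in P/J$ or $\bar b\in P/J$. This part is routine.

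For the first assertion of (2), I would use the standard localization identity $(S^{-1}I)(S^{-1}P)=S^{-1}(IP)$, so that ``$S^{-1}I$-prime'' unwinds to the condition that $\tfrac{ab}{st}$ lies in $S^{-1}P-S^{-1}(IP)$. Properness of $S^{-1}P$ is the usual consequence of $P\cap S=\emptyset$. Now suppose $\tfrac{a}{s}\tfrac{b}{t}\in S^{-1}P-S^{-1}(IP)$. Membership in $S^{-1}P$ yields $u\in S$ with $uab\in P$; and if $uab\in IP$ then $\tfrac{ab}{st}=\tfrac{uab}{ust}\in S^{-1}(IP)$, contradicting the assumption. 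Thus $(ua)b=u(ab)\in P-IP$, and $I$-primeness of $P$ gives $ua\in P$ or $b\in P$; the former yields $\tfrac{a}{s}=\tfrac{ua}{us}\in S^{-1}P$ and the latter yields $\tfrac{b}{t}\in S^{-1}P$.

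The main obstacle is the contraction $S^{-1}P\cap R=P$, where the hypothesis $S^{-1}P\neq S^{-1}(IP)$ must actually be used. The inclusion $P\subseteq S^{-1}P\cap R$ is automatic. For the reverse, take $r$ with $\tfrac{r}{1}\in S^{-1}P$, so $sr\in P$ for some $s\in S$, and assume for contradiction that $r\notin P$. If $sr\notin IP$ then $sr\in P-IP$ forces $s\in P$ or $r\in P$, both impossible ($s\in S$ and $r\notin P$). The remaining case $sr\in IP$ is where I expect the real work. Here I would exploit $S^{-1}P\neq S^{-1}(IP)$ to produce $p\in P$ with $vp\notin IP$ for every $v\in S$; this is exactly the statement that some element of $S^{-1}P$ avoids $S^{-1}(IP)$, translated into an elementwise condition. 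Then I would examine $s(r+p)=sr+sp$: it lies in $P$, and since $sr\in IP$ while $sp\notin IP$ we get $s(r+p)\notin IP$, so $s(r+p)\in P-IP$. Applying $I$-primeness gives $s\in P$ (impossible) or $r+p\in P$, whence $r\in P$, the desired contradiction. The shift by $p$ is the analogue of the trick used for weakly prime ideals in \cite{and2003}, and correctly identifying $p$ via the equivalence between $S^{-1}P\neq S^{-1}(IP)$ and the condition ``$vp\notin IP$ for all $v\in S$'' is the crux of the argument.
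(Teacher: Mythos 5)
Your proof is correct, and parts (1) and the first assertion of (2) coincide with the paper's own argument up to cosmetics (the paper factors the product as $a\cdot(bk)$ with $k\in S$ rather than $(ua)\cdot b$; both reduce to $I$-primeness of $P$ in $R$ in the same way). Where you genuinely diverge is the contraction $S^{-1}P\cap R=P$. The paper argues globally: for $a\in S^{-1}P\cap R$ with $au\in P$, either $au\notin IP$, which gives $a\in P$, or $au\in IP$, which gives $a\in S^{-1}(IP)\cap R$; hence $S^{-1}P\cap R\subseteq P\cup\bigl(S^{-1}(IP)\cap R\bigr)$, and the lemma already used in the proof of Theorem \ref{13} (an ideal that is a union of two ideals equals one of them) yields $S^{-1}P\cap R=P$ or $S^{-1}P\cap R=S^{-1}(IP)\cap R$, the latter forcing $S^{-1}P=S^{-1}(IP)$ upon re-localizing, contrary to hypothesis. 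You instead work elementwise: you convert $S^{-1}P\neq S^{-1}(IP)$ into a witness $p\in P$ with $vp\notin IP$ for all $v\in S$ (this translation is valid, by the standard criterion that $x/w\in S^{-1}J$ if and only if $vx\in J$ for some $v\in S$, together with $S^{-1}(IP)\subseteq S^{-1}P$), and then run the shift $r\mapsto r+p$ to land $s(r+p)$ in $P-IP$ and conclude by $I$-primeness, with $s\in P$ excluded by $P\cap S=\emptyset$. Both arguments are sound. The paper's route is shorter and delivers the clean dichotomy ``$S^{-1}P\cap R=P$ or $S^{-1}P=S^{-1}(IP)$'' without case analysis, at the cost of invoking the union-of-two-ideals lemma and the identity $S^{-1}\bigl(S^{-1}J\cap R\bigr)=S^{-1}J$; your route is entirely self-contained at the level of elements, shows exactly where the hypothesis $S^{-1}P\neq S^{-1}(IP)$ is consumed, and reuses the same translation trick the paper employs in Theorem \ref{1}(2) and in the proof of (3)$\Rightarrow$(4) of Theorem \ref{13}, so it fits the spirit of the rest of the paper even though it is not the proof given there.
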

	\begin{proof}
		(1) Let $x,y\in R$ with $\bar{x} \bar{y}\in P/J-I(P/J)=P/J-(IP+J)/J$. Thus $xy\in P-(IP+J)$. Hence $xy\in P-IP$, so $x\in P$ or $y\in P$. Therefore $\bar{x}\in P/J$ or $\bar{y}\in P/J$; so $P/J$ is $I$-prime. 
		\\(2) Let $\frac{a}{s} \frac{b}{t}\in S^{-1}P-S^{-1}I.S^{-1}P\subseteq S^{-1}P-S^{-1}(IP)=S^{-1}(P-IP)$. So $abk\in P-IP$ for some $k\in S$. Thus $P$ $I$-prime gives $a\in P$ or $bk\in P$. So $a/s\in S^{-1}P$ or $b/t\in S^{-1}P$. Hence $S^{-1}P$ is $S^{-1}I$-prime. Let $a\in S^{-1}P\cap R$, so there exists $u\in S$ with $au\in P$. If $au\notin IP$, then $au\in P-IP$, so $a\in P$. If $au\in IP$, then $a\in S^{-1}(IP)\cap R$. So $S^{-1}P \cap R \subseteq P \cup (S^{-1}(IP) \cap R)$. Hence $S^{-1}P \cap R=P$ or $S^{-1}P \cap R= S^{-1}(IP) \cap R$. But the second case gives $S^{-1}P =S^{-1}(IP)$. 
	\end{proof}

	Let $R_{1}$ and $R_{2}$ be two rings. It is known that the prime ideals of $R_{1}\times R_{2}$ have the form $P\times R_{2}$ or $R_{1}\times Q$, where $P$ is a prime ideal of $R_{1}$ and $Q$ is a prime ideal of $R_{2}$. We next, generalize this result to $I$-primes.

	\begin{thm}
		For $i=1,2$ let $R_{i}$ be ring and $I_{i}$ ideal of $R_{i}$. Let $I=I_{1}\times I_{2}$. Then the $I$-prime ideals of $R_{1}\times R_{2}$ have exactly one of the following three types:
		\\(1) $P_{1}\times P_{2}$, where $P_{i}$ is a proper ideal of $R_{i}$ with $I_{i}P_{i}=P_{i}$.
		\\(2) $P_{1}\times R_{2}$ where $P_{1}$ is an $I_{1}$-prime of $R_{1}$ and $I_{2}R_{2}=R_{2}$. 
		\\(3) $R_{1}\times P_{2}$, where $P_{2}$ is an $I_{2}$-prime of $R_{2}$ and $I_{1}R_{1}=R_{1}$.
	\end{thm}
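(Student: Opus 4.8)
The plan is to exploit the product structure directly. Every ideal of $R_1\times R_2$ has the form $P=P_1\times P_2$ with $P_i$ an ideal of $R_i$, and products of ideals split coordinatewise, so $IP=(I_1\times I_2)(P_1\times P_2)=I_1P_1\times I_2P_2$; since each $R_i$ has an identity, $I_iR_i=I_i$. With this I would describe $P-IP$ explicitly: $(x_1,x_2)\in P-IP$ precisely when $x_1\in P_1$, $x_2\in P_2$, and at least one of $x_1\notin I_1P_1$, $x_2\notin I_2P_2$ holds. Because $P$ is proper, we cannot have both $P_1=R_1$ and $P_2=R_2$, so there are exactly three mutually exclusive regimes: both $P_i$ proper; $P_2=R_2$ with $P_1$ proper; and $P_1=R_1$ with $P_2$ proper. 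These will produce types (1), (2), (3) respectively, which is what yields ``exactly one of three types''.

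In the both-proper regime I would show $P$ is $I$-prime iff $I_1P_1=P_1$ and $I_2P_2=P_2$. The backward direction is immediate: these equalities give $IP=P$, so $P-IP=\emptyset$ and the condition holds vacuously. For the forward direction I argue contrapositively: if, say, $I_1P_1\subsetneq P_1$, I pick $x_1\in P_1\setminus I_1P_1$ and test $(x_1,1)(1,0)=(x_1,0)$, which lies in $P-IP$ while neither factor lies in $P$ (here both $P_1,P_2$ proper gives $1\notin P_1$ and $1\notin P_2$). This contradicts $I$-primeness, pinning down type (1).

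For $P=P_1\times R_2$ with $P_1$ proper I would prove $P$ is $I$-prime iff $P_1$ is $I_1$-prime and $I_2R_2=R_2$ (type (2); the regime $P=R_1\times P_2$ is symmetric), where $IP=I_1P_1\times I_2$. For the backward direction, $I_2R_2=R_2$ gives $P-IP=(P_1-I_1P_1)\times R_2$, so any product landing in $P-IP$ has first coordinate in $P_1-I_1P_1$, and $I_1$-primeness of $P_1$ finishes it. That $P_1$ must be $I_1$-prime is seen by testing $(a_1,1)(b_1,1)=(a_1b_1,1)$ for $a_1b_1\in P_1-I_1P_1$: this product lies in $P-IP$, forcing $a_1\in P_1$ or $b_1\in P_1$.

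The step I expect to be the main obstacle is forcing $I_2R_2=R_2$ in the forward direction of this regime. Assuming $P$ is $I$-prime and $I_2\neq R_2$, I would manufacture a violation, but this is only possible when $P_1$ is not prime: choosing $a_1,b_1\notin P_1$ with $a_1b_1\in P_1$, the $I_1$-primeness of $P_1$ already established forces $a_1b_1\in I_1P_1$ (otherwise $a_1b_1\in P_1-I_1P_1$ would put $a_1$ or $b_1$ in $P_1$), and then for $e\in R_2\setminus I_2$ the element $(a_1,1)(b_1,e)=(a_1b_1,e)$ lies in $P-IP$ with neither factor in $P$, a contradiction. The delicate point to handle carefully is exactly that this mechanism needs $P_1$ non-prime, so the genuinely prime case must be tracked separately. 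Conceptually the whole classification becomes transparent through the Lemma: $P=P_1\times P_2$ is $I$-prime iff $P_1/I_1P_1\times P_2/I_2P_2$ is weakly prime in $R_1/I_1P_1\times R_2/I_2P_2$, and the three types simply record which of the quotient rings $R_i/I_iP_i$ collapses to the zero ring, the latter being forced precisely by $I_iR_i=R_i$.
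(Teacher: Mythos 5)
Your coordinatewise setup and most of your verifications are correct, and they run along essentially the same lines as the paper's proof (your test elements $(x_1,1)(1,0)$, $(a_1,1)(b_1,1)$, $(a_1,1)(b_1,e)$ play the role of the paper's $(x,1)(1,y)$ and $(a,0)(b,0)$). But the step you yourself flag as the main obstacle --- forcing $I_2R_2=R_2$ in the regime $P=P_1\times R_2$ --- is a genuine gap, and it is not one that can be closed by ``tracking the prime case separately'': when $P_1$ is prime, $P_1\times R_2$ is a prime ideal of $R_1\times R_2$ (its quotient is $R_1/P_1$), hence $I$-prime for \emph{every} choice of $I$, with no constraint on $I_2$ whatsoever. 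Concretely, take $R_1=R_2=\mathbb{Z}$, $I_1=I_2=0$, and $P=2\mathbb{Z}\times\mathbb{Z}$: this ideal is prime, hence $(0\times 0)$-prime, yet it is of none of the three types --- not type (1) since its second component is not proper, not type (2) since $I_2R_2=0\neq R_2$, and not type (3). So the statement as written is false, and no completion of your argument exists; what your analysis actually establishes is the corrected classification in which type (2) reads ``$P_1\times R_2$ where either $P_1$ is prime, or $P_1$ is $I_1$-prime and $I_2R_2=R_2$'' (and symmetrically for type (3)).

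For comparison, the paper's own proof has exactly the same hole, only hidden rather than flagged: from an $I$-prime $P_1\times P_2$ with $P_1\neq I_1P_1$ it deduces, via the product $(x,1)(1,y)$, that $P_1=R_1$ or $P_2=R_2$, and in the latter case it simply concludes ``so $P_1\times R_2$ is $I$-prime, where $P_1$ is $I_1$-prime''; the condition $I_2R_2=R_2$ demanded by type (2) is never verified, and, as the example above shows, it cannot be. Ironically, the paper itself supplies the seed of the counterexample: in the proof of Theorem \ref{4}(1) it records that $P\times S$ is $0$-prime if and only if $P$ is prime, which with $I=0\times 0$ and $P$ prime already contradicts the present classification. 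Your closing remark about the Lemma points to the cleanest repair: $P_1\times P_2$ is $I$-prime iff its image is weakly prime in $R_1/I_1P_1\times R_2/I_2P_2$, and when both quotient rings are nonzero the weakly prime ideals of such a product are the zero ideal together with the ideals of the form $\mathfrak{p}\times(\text{whole ring})$ with $\mathfrak{p}$ prime; the first possibility gives type (1), the second gives precisely the missing ``prime'' case, and types (2)--(3) occur only in the degenerate situation where one quotient $R_i/I_iP_i$ is the zero ring, i.e.\ $P_i=R_i$ and $I_i=R_i$.
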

	\begin{proof}
		We first prove that an ideal of $R_{1}\times R_{2}$ having one of these three types is $I$-prime. The first type is clear since $P_{1}\times P_{2}-I(P_{1}\times P_{2})=P_{1}\times P_{2}-(I_{1}P_{1}\times I_{2}P_{2})=\phi$. Suppose that $P_{1}$ is $I_{1}$-prime and $I_{2}R_{2}=R_{2}$. Let $(a,b)(x,y)\in P_{1}\times R_{2}-I_{1}P_{1}\times I_{2}R_{2}=P_{1}\times R_{2}-I_{1}P_{1}\times R_{2}=(P_{1}-I_{1}P_{1})\times R_{2}$. Then $ax\in P_{1}-I_{1}P_{1}$ implies that $a\in P_{1}$ or $x\in P_{1}$, so $(a,b)\in P_{1}\times R_{2}$ or $(x,y)\in P_{1}\times R_{2}$. Hence $P_{1}\times R_{2}$ is $I$-prime. Similarly we can prove the last case.
	
	Next, let $P_{1}\times P_{2}$ be $I$-prime and $ab\in P_{1}-I_{1}P_{1}$. Then $(a,0)(b,0)=(ab,0)\in P_{1}\times P_{2}-I(P_{1}\times P_{2})$, so $(a,0)\in P_{1}\times P_{2}$ or $(b,0)\in P_{1}\times P_{2}$, i-e, $a\in P_{1}$ or $b\in P_{1}$. Hence $P_{1}$ is $I_{1}$-prime. Likewise, $P_{2}$ is $I_{2}$-prime.
	
	Assume that $P_{1}\times P_{2}\neq I_{1}P_{1}\times I_{2}P_{2}$. Say $P_{1}\neq I_{1}P_{1}$. Let $x\in P_{1}-I_{1}P_{1}$ and $y\in P_{2}$. Then $(x,1)(1,y)=(x,y)\in P_{1}\times P_{2}$. So $(x,1)\in P_{1}\times P_{2}$ or $(1,y)\in P_{1}\times P_{2}$. Thus $P_{2}= R_{2}$ or $P_{1}=R_{1}$. Assume that $P_{2}=R_{2}$. So $P_{1}\times R_{2}$ is $I$-prime, where $P_{1}$ is $I_{1}$-prime.
	\end{proof}

\vskip 0.4 true cm

\begin{center}{\textbf{Acknowledgments}}
\end{center}
The author is deeply grateful to referee for his careful reading of the manuscript. Whose comments made the paper more readable.\\ \\
\vskip 0.4 true cm

\bibliographystyle{amsplain}

\begin{thebibliography}{5}


\bibitem{and2003}
D. D. Anderson and E. Smith, Weakly prime ideals, Houston J. Math 29 (2003), 831--840. 



\bibitem{Sharma}
S. M. Bhatwadekar and P. K. Sharma, Unique factorization and birth of almost primes, Comm. Algebra 33 (2005), 43--49. 



\bibitem{Kaplansky}
I. Kaplansky, Commutative Rings (Revised Edition), Chicago, University of Chicago press, 1974.





\end{thebibliography}

\bigskip
\bigskip

{\footnotesize {\bf Ismael Akray}\; \\ {Department of
Mathematics}, {University
of Soran} {Kurdistan region-Erbil, Iraq.}\\
{\tt Email: ismael.akray@soran.edu.iq  \hspace{.5cm}  ismaeelhmd@yahoo.com}\\



\vskip.5cm

\end{document}